\newtheorem{theorem}{Theorem}
\newtheorem{proposition}[theorem]{Proposition}
\newtheorem{lemma}[theorem]{Lemma}
\newtheorem{corollary}[theorem]{Corollary}
\newtheorem{claim}[theorem]{Claim}
\theoremstyle{definition}
\newtheorem{example}[theorem]{Example}
\newtheorem{definition}[theorem]{Definition}
\newtheorem{remark}[theorem]{Remark}
\newcommand\N{\mathbb N}
\newcommand\<{\langle}
\renewcommand\>{\rangle}
\newcommand\ZZ{\mathbb{Z}}
\newcommand\RR{\mathbb{R}}
\newcommand\bt{\begin{tabular}}
\newcommand\et{\end{tabular}}
\def\qed{\hfill\rlap{$\sqcup$}$\sqcap$\par}
\newcommand{\EGS}{EGS}
\newcommand{\Ereg}{MR2135549}
\newcommand{\Cannon}{MR758901}
\newcommand{\GrigTat}{MR1471889}
\newcommand{\NS}{MR1329042}
\newcommand{\Pascal}{MR1475566}
\newcommand{\Epstein}{MR1161694}
\newcommand{\Gromov}{MR623534}
\newcommand{\GromovHyp}{MR919829}
\begin{document}

\title{On groups whose geodesic growth is polynomial}

\author[M.R. Bridson]{Martin R. Bridson}
\address{Mathematical Institute, Oxford University, 24-29 St Giles', Oxford OX1 3LB, UK} \email{bridson@maths.ox.ac.uk}

\author[J. Burillo]{Jos\'e Burillo}
\address{
Departament de Matem\`atica Aplicada IV, EETAC, Universitat Polit\`ecnica de Catalunya,
08860 Castelldefels, Barcelona, Spain} \email{burillo@mat.upc.es}

\author[M. Elder]{Murray Elder}
\address{School of Mathematical and Physical Sciences, The University of Newcastle,
Callaghan, NSW 2308 Australia}
\email{murray.elder@newcastle.edu.au}

\author[Z.  \v Suni\'c]{Zoran \v Suni\'c}
\address{Department of Mathematics, Texas A\&M University, MS-3368, College Station, TX 77843-3368, USA}
\email{sunic@math.tamu.edu}

\subjclass[2000]{20F65}
\keywords{Geodesic growth; virtually nilpotent group;  virtually cyclic abelianization.}
\date{March 2011}

\begin{abstract}
This note records some
observations concerning geodesic growth functions.  If a nilpotent group is not virtually cyclic then it has exponential geodesic
growth with respect to all finite generating sets. On the other hand, if a finitely generated group $G$ has an element whose normal closure is
abelian and of finite index, then $G$ has a finite generating set with respect to which the geodesic growth is polynomial (this includes all virtually cyclic groups).
\end{abstract}

\maketitle

\section{Introduction}

Growth for finitely generated groups is a concept that has been studied extensively in the last fifty years, providing landmarks for modern group theory,
most notably Gromov's theorem characterizing groups that have  polynomial volume growth as virtually nilpotent groups~\cite{\Gromov}.
Volume growth functions count the number of elements in the ball of radius $n$ about the identity in the Cayley graph of a finitely generated group, while
{\em geodesic growth functions}
count the number of geodesics of length at most $n$ that begin at the identity
(cf.~\cite{\GrigTat}, \cite{\NS}, \cite{\Cannon}). The geodesic growth function of a group with respect to any finite generating set is bounded above by an exponential function. The purpose of the present note is to record some elementary observations concerning groups that have subexponential geodesic growth functions.

Previous work on geodesic growth functions has focussed mainly on the issue of
rationality of the associated formal power series. Gromov (\cite{\GromovHyp}, p.137) and Epstein {\em et al.} (\cite{\Epstein} p.80)  established
rationality for hyperbolic groups with respect to arbitrary finite generating sets.
There are similar rationality results for families of non-hyperbolic groups, but
examples of Cannon, described in~\cite{\NS},
show that rationality depends heavily on the choice of generators in the non-hyperbolic case.
Our results concerning polynomial geodesic growth\footnote{i.e. the geodesic growth function is bounded above by a polynomial} exhibit a
similar dependency.
Shapiro \cite{\Pascal}  considered the function $p_X:G\rightarrow \mathbb N$ which counts the number of geodesics for each group element, with respect to some finite generating set $X$. He gives an explicit formula for $p_X$ for abelian groups, from which a formula for the geodesic growth function can easily be obtained. We make use of his study of these functions in Section \ref{sec:zz} below.

In Section \ref{sec:defn} we  define geodesic growth carefully and present some basic properties and examples.
 In Section \ref{sec:zz} we prove that any nilpotent group $G$ that is not virtually cyclic has exponential geodesic growth with respect to all finite generating sets.  
 In the following sections we restrict our attention to
groups that have polynomial geodesic growth with respect to some generating set; such groups are virtually nilpotent
with virtually cyclic abelianization. In Section \ref{sec:vcyclic} we present two groups of the form $\ZZ^2\rtimes C_2$; both have virtually cyclic abelianization but one has exponential geodesic growth with respect to every generating set while the other has polynomial growth with respect to some finite generating sets.
In
Section~\ref{sec:main} we establish the following sufficient condition for polynomial geodesic growth.

\begin{theorem}[Main Theorem] \label{t:main}
Let $G$ be a finitely generated group.  If there exists an element $x\in  G$ whose normal closure is abelian and of finite index, then there exists a finite generating set for $G$ with respect to which the geodesic growth of $G$ is polynomial.
\end{theorem}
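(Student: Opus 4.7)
The plan is to construct an explicit finite generating set $X$ for $G$ and to control the combinatorial structure of $X$-geodesics.

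Let $N=\langle\langle x\rangle\rangle$; by hypothesis $N$ is abelian, normal, and of finite index $n=[G:N]$. As a finitely generated abelian group, $N\cong\ZZ^\alpha\oplus T$ with $T$ finite. The image of $N$ in $G^{ab}$ is cyclic, generated by the common image of all conjugates of $x$, so $G^{ab}$ is virtually cyclic; in particular $G$ is virtually nilpotent, and the ball of radius $L$ in any Cayley graph has cardinality $O(L^\alpha)$. Fix a transversal $\{1=\sigma_0,\sigma_1,\dots,\sigma_{n-1}\}$ for $N$ in $G$ and set
\[ X=\{x^{\pm 1}\}\cup\{\sigma_i^{\pm 1}:1\le i\le n-1\}. \]
Given polynomial volume growth, it suffices to prove that the number of $X$-geodesics representing any $g\in G$ is polynomially bounded in $|g|_X$.

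Given a geodesic word $w$, separate its letters into \emph{$x$-letters} and \emph{$\sigma$-letters}. The $\sigma$-letters trace a walk $\tau_0=1,\tau_1,\ldots,\tau_r$ in the Cayley graph of the finite group $G/N$. An $x$-letter occurring while the running coset is $\tau_i$ contributes $\pm x_{[\tau_i]}:=\pm\tau_i x\tau_i^{-1}\in N$ to the abelian part, so the $x$-letters split into $r+1$ blocks of signed contributions, one per coset visited. Commuting all $x$-letters to the left and using abelianness of $N$, one reads off that $w$ evaluates to $\bigl(\sum_{k=1}^m A_k x_k+\eta\bigr)\cdot\tau_r$, where $x_1,\dots,x_m$ are the $G$-conjugates of $x$, $A_k$ is the signed net of $x$-letters in blocks of coset type giving $x_k$, and $\eta$ is a bounded correction coming from the transversal cocycle. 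Geodesicity forbids internal cancellation within any block, so $|w|_X=r+\sum_i|a_i|$ where $a_i$ is the signed net of the $i$-th block.

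The central technical step, and the main obstacle, is the following lemma: there is a constant $R=R(G,X)$ such that any $X$-geodesic uses at most $R$ $\sigma$-letters. Granted the lemma, a geodesic to $g$ is determined by (i) its $\sigma$-walk (at most $|G/N|^R$ choices), (ii) the positions of the $r\le R$ $\sigma$-letters among the $|w|_X$ letters (bounded by $\binom{|w|_X}{r}=O(|g|_X^R)$), and (iii) the signs, together with how each $A_k$ is partitioned among the at most $R+1$ blocks of its coset type (polynomial in $|g|_X$); combined with the polynomially-sized ball, this yields polynomial geodesic growth. To prove the lemma one would suppose $r$ large, invoke pigeonhole on the $\tau_i$ to find a repeat $\tau_i=\tau_j$, and attempt to delete the resulting ``loop'' $\sigma$-subword (whose evaluation $U$ lies in $N$) while compensating with additional $x$-letters at an appropriate coset. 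The subtlety is that $U$ need not be a $\ZZ$-multiple of any single $x_{[\tau]}$, so a naive substitution may require reintroducing $\sigma$-letters. My plan to surmount this is to exploit that $G^{ab}$ is virtually cyclic, giving a surjection $\phi:G\to\ZZ$ with $\phi(x)\ne 0$, and to adjust the transversal by elements of $N$ so that the $\sigma_i$ have controlled $\phi$-image; this should force any sufficiently long loop contribution (modulo a bounded error absorbable into $x$-letters) to lie in $\ZZ\cdot x$, contradicting geodesicity once $r$ exceeds a constant depending only on $G$.
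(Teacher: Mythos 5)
Your overall architecture (bound the number of non-$x$ letters in a geodesic, then count) is the right shape, but the step you yourself flag as ``the main obstacle'' is a genuine gap, and it is not merely unproven: the lemma is \emph{false} for the generating set $X=\{x^{\pm1}\}\cup\{\sigma_i^{\pm1}\}$ with an arbitrary transversal. Take $G=\ZZ^2\rtimes_{\phi_1}C_2=\langle a,b,t\mid [a,b]=1,\ t^2=1,\ a^t=b\rangle$ with $x=a$ (so $N=\langle a,b\rangle$ has index $2$) and choose the perfectly legitimate transversal representative $\sigma=b^2t$. Then $\sigma^2=a^2b^2$, and for each $n$ the $\binom{2n}{n}$ shuffles of $n$ letters $a$ with $n$ blocks $\sigma\sigma$ all spell the element $a^{3n}b^{2n}$; a convex-polygon estimate (contributions per letter lie in the hexagon spanned by $\pm(1,0),\pm(0,1),\pm(1,1)$, and $(3n,2n)=n(1,0)+2n(1,1)$ lies on the boundary of $3n$ times that hexagon) shows these words of length $3n$ are all geodesic. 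So this instance of your construction has \emph{exponential} geodesic growth and unboundedly many $\sigma$-letters in geodesics. Your proposed repair --- normalizing the transversal against a single character $\phi:G\to\ZZ$ --- happens to help in this rank-$2$ example, but one character cannot control the full cocycle $\sigma_i\sigma_j\sigma_k^{-1}\in N$ when $\mathrm{rk}(N)\ge 2$ and $|G/N|>2$, and in any case no proof is supplied.

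What is missing is precisely the mechanism the paper builds its proof around: one must enlarge the generating set, not just choose a transversal carefully. The paper takes $X=S\cup\{x^{N},x^{-N}\}\cup R\cup R^{-1}$, where $S\subset A$ is a symmetric, conjugation-invariant set of ``short'' generators containing all cocycle values $D=\{pqr^{-1}\}$, and $N$ is a \emph{large} power. Two lemmas then do the work: ``Highways beat byways'' shows that once $N$ is large enough, any word over $S$ that is geodesic in $(G,d_X)$ has length at most a constant $k$ (because such a word is $3$-efficient for the metric on $A$ generated by $S$ together with $N$-th powers of the conjugates of $x$, and efficiency forces boundedness); and a rewriting argument (push $S$-letters left using conjugation-invariance, collapse adjacent transversal letters into $D\subset S$, and minimize) bounds the number of transversal letters by $|G/A|$. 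Only then does the ``Dominating generator'' count apply. Without the dominating power $x^{\pm N}$ and the absorbing set $S$, nothing prevents products of transversal elements (or of cocycle values) from being more efficient per letter than $x$ itself at moving through $A$, which is exactly what the counterexample above exploits. As written, your argument does not establish the theorem.
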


In Section \ref{s:six} we present a sharpening of this result in the virtually cyclic case.

\begin{theorem}\label{p:pe-for-vc}
Let $G$ be a virtually cyclic group generated by a finite symmetric set $X$.
The geodesic growth function $\Gamma_{G,X}$ is either bounded above and below by an exponential function,  or else is bounded above and below by polynomials of the same degree.
\end{theorem}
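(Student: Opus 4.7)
Assume $G$ is infinite; the finite case is immediate, since then $\Gamma_{G,X}$ is eventually constant. By the structure theorem for virtually cyclic groups, there is an exact sequence $1 \to K \to G \stackrel{\phi}{\to} Q \to 1$ with $K$ finite and $Q \in \{\ZZ, D_\infty\}$. Fix such a $\phi$, let $D$ be the diameter of $K$ in the Cayley graph of $(G,X)$, and let $Q$ act on $\ZZ$ in the standard way (translations, or translations and reflections). Set $A(g) = \phi(g) \cdot 0 \in \ZZ$ and $M = \max_{x \in X} |A(x)|$.

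The main technical input is a \emph{speed lemma}. Define $\sigma = \sup_{w \in X^*} |A(w)|/|w|$. A direct computation writes $A(w) = \sum_i \nu_i A(x_i)$ for a word $w = x_1 \cdots x_n$, where $\nu_i \in \{\pm1\}$ records the parity of the number of reflection letters among $x_1,\ldots,x_{i-1}$; in particular $\sigma \le M$, and since $G$ is infinite one has $\sigma > 0$ (witnessed by powers of a lift of a translation generator of $Q$). Using $|g|_X \le |\phi(g)|_{\phi(X)} + D$ in $Q$ and the obvious inequality $|\phi(g)|_{\phi(X)} \ge |A(g)|/\sigma$, one shows that $|g|_X = |A(g)|/\sigma + O(1)$, with a constant depending only on $(G,X)$. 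Consequently, a geodesic word $w$ of length $n$ satisfies $|A(w)| = \sigma n + O(1)$.

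I would then encode the ``max-speed'' structure by a finite-state automaton $\mathcal{A}$ whose states record the current parity $\nu_i$ together with a bounded amount of phase data (needed in the $D_\infty$-case when no single letter realises speed $\sigma$, e.g.\ when $\sigma$ is attained only by alternating pairs of reflections), and whose transitions are labelled by letters of $X$. An accepting run of $\mathcal{A}$ of length $n$ is, by construction, a word $w$ with $|A(w)| = \sigma n - O(1)$, and any such word is automatically a geodesic because $|g|_X \ge |A(g)|/\sigma \ge n - O(1)$ and no shorter word representing the same element exists once $n$ is large. The efficiency lemma moreover guarantees that every geodesic of length $n$ is obtained from an accepting run by inserting at most $K_0 = O(1)$ ``exceptional'' letters (those contributing sub-optimally to $A$).

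The dichotomy is now read off from $\mathcal{A}$. If some strongly connected component of $\mathcal{A}$ admits two distinct cycles, then the number of accepting runs of length $n$ grows at least as $\alpha^n$ for some $\alpha > 1$; combined with the trivial upper bound $|X|^n$, this yields exponential growth on both sides. Otherwise $\mathcal{A}$ is essentially deterministic, so for each length there are $O(1)$ accepting runs; any geodesic of length $n$ is one of these together with at most $K_0$ exceptional insertions, which gives the polynomial upper bound $O(n^{K_0})$.

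The step I expect to be the hardest is the matching polynomial \emph{lower} bound in the non-branching case. My plan is to fix one accepting backbone and exhibit a family of $\Theta(n^{d})$ geodesics obtained by inserting a maximal admissible set of exceptional letters at varying positions, relying on normality of $K$ to control how such insertions modify the represented element. One must verify that enough of these arrangements are genuinely geodesic (no shorter word representing the same element exists), and that different arrangements give distinct words; this argument is delicate because inserting an exceptional letter can change the element by an arbitrary element of $K$, and one has to ensure the resulting displacement still realises the minimum length. Carrying this out carefully — perhaps by noting that the language of geodesics is regular for virtually cyclic groups and invoking the structure of rational power series with non-negative monotone coefficients — pins down a single degree $d$ such that both bounds read $\Theta(n^{d})$, completing the dichotomy.
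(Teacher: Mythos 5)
Your overall dichotomy strategy is reasonable, but two of its load-bearing steps are not established, and the second one is precisely the hard content of the theorem. First, the claim that an accepted word $w$ of length $n$ with $|A(w)|=\sigma n-O(1)$ ``is automatically a geodesic'' does not follow: that estimate only yields $|\overline{w}|_X\ge n-O(1)$, i.e.\ $w$ is within a bounded additive error of geodesic length, which is weaker. For instance, in $\ZZ\times C_2=\langle t,a\mid a^2=1,\,at=ta\rangle$ with $X=\{t^{\pm1},a\}$ one has $\sigma=1$, and $w=t^naa$ satisfies $|A(w)|=\sigma(n+2)-2$ yet represents $t^n$, whose length is $n$. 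So your automaton accepts non-geodesics; in the ``branching'' case the exponentially many accepting runs therefore need not yield exponentially many geodesics, and the converse assertion that every geodesic is an accepting run with at most $K_0$ exceptional insertions is stated rather than proved. (A smaller slip: you cannot deduce $|g|_X=|A(g)|/\sigma+O(1)$ from $|g|_X\le|\phi(g)|_{\phi(X)}+D$ together with $|\phi(g)|_{\phi(X)}\ge|A(g)|/\sigma$; the missing and nontrivial direction is the upper bound $|\phi(g)|_{\phi(X)}\le|A(g)|/\sigma+O(1)$, which requires knowing that $\sigma$ is asymptotically realised.)

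Second, the matching polynomial lower bound --- the actual content of the ``same degree'' clause --- is left as a plan that you yourself flag as delicate, and your closing suggestion to invoke regularity of the geodesic language and the structure of the associated rational series is, in fact, the entire proof in the paper: a virtually cyclic group is hyperbolic, hence its geodesic words form a regular language with respect to every finite generating set (Epstein {\em et al.}), and a regular language of subexponential growth is simply starred, so its growth function is bounded above and below by polynomials of the same degree (Bridson--Gilman); otherwise the language has an exponential lower bound. I would recommend abandoning the bespoke speed automaton and arguing this way directly: your construction essentially amounts to reproving regularity of the geodesic language by hand, and even then you would still need the regular-language dichotomy (or an equivalent pumping argument) to pin down the matching degrees.
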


\section{Definition and elementary properties}\label{sec:defn}

Throughout this paper we consider groups $G$ equipped
with a finite generating set $X=\{x_1,\ldots,x_m\}$
that is symmetric (i.e. if $x\in X$ then
$x^{-1}\in X$).
We often think of $X$ as a subset of $G$ but there are times when
we must be more formal and regard the choice of
generators as an epimorphism $F(X)\to G$ from the free monoid generated by $X$; in particular this is necessary when we want to allow repetitions in our generating sets, i.e. allow that certain elements of $X$ have the same image under $F(X)\to G$.

The central concept of study in this paper is  \emph{geodesic growth}.

\begin{definition} The \emph{geodesic growth} of a group $G$ with respect to the symmetric generating set $X$ is the function $\Gamma_{G,X}(n)$ counting, for each $n$, the number of geodesics of length at most $n$ starting at 1 in the Cayley graph of the group $G$ with respect to $X$.
\end{definition}

There is an obvious upper bound on $\Gamma_{G,X}(n)$, namely $\Gamma_{G,X}(n)\le |X|^n$.
We say that  $\Gamma$ has {\em exponential} geodesic growth with respect to $X$ if there exists $b>1$ such that
$\Gamma_{G,X}(n)\ge b^n$ for all $n\in\N$. We say that $\Gamma$ has {\em polynomial} geodesic growth with respect to $X$ if there exist $c,d\in\N$ such that
$\Gamma_{G,X}(n)\le cn^d$ for all $n\in\N$. (At present, it is unclear if this is
equivalent to demanding upper and lower bounds of the same polynomial degree, cf.~\cite{MR623534}.)

The more usual (volume) growth, $\gamma_{G,X}(n)$,
counts the number of vertices in the Cayley graph of $G$ that are at distance at most $n$
from the identity. Since each vertex is connected to the identity by a geodesic, this function provides
a lower bound for
 $\Gamma_{G,X}(n)$.

\begin{lemma}\label{p:lower-bound}
Let $G$ be a group with finite generating set $X$. The geodesic growth function $\Gamma_{G,X}(n)$
is bounded below by the word growth function $\gamma_{G,X}(n)$: for all $n\in\N$,
\[ \Gamma_{G,X}(n) \geq \gamma_{G,X}(n). \]
\end{lemma}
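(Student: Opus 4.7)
The plan is to exhibit a surjection from the set of geodesics of length at most $n$ starting at $1$ onto the set of vertices in the ball of radius $n$ about $1$; the inequality on cardinalities then follows immediately.

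Concretely, I would define the endpoint map $\pi$ sending each geodesic word $w\in F(X)$ of length at most $n$ starting at $1$ to its image in $G$ under $F(X)\to G$. The image of $\pi$ is precisely the set of group elements $g\in G$ with $d_X(1,g)\le n$, because (i) if $w$ is a geodesic of length $\ell\le n$, then its endpoint lies at distance exactly $\ell\le n$ from $1$, so it lies in the ball of radius $n$; and (ii) conversely, for any $g$ in that ball, any shortest word in $X$ representing $g$ is by definition a geodesic starting at $1$ of length $d_X(1,g)\le n$, hence contributes to the source of $\pi$ and maps to $g$. Thus $\pi$ is surjective onto the ball of radius $n$, which has cardinality $\gamma_{G,X}(n)$.

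Since the source of $\pi$ has cardinality $\Gamma_{G,X}(n)$ by definition, we conclude $\Gamma_{G,X}(n)\ge\gamma_{G,X}(n)$. There is no real obstacle here; the only subtlety worth flagging is the distinction, emphasized in Section~\ref{sec:defn}, between $X$ as a subset of $G$ and as the generating set of the free monoid $F(X)$: geodesics are counted as words, not as group elements, but the endpoint map is still well defined and surjective onto the ball, which is all that the argument requires.
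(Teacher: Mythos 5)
Your argument is correct and is exactly the paper's reasoning: the lemma is justified there by the one-line observation that every vertex in the ball of radius $n$ is the endpoint of at least one geodesic, i.e.\ the endpoint map from geodesics onto the ball is surjective. Your write-up simply makes that surjection explicit, so there is nothing to add.
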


The following examples show that geodesic growth is heavily dependent on the choice of generating set.

\begin{example}\label{eg1}Consider the group $G=\ZZ\times C_2$. (Throughout this paper, $C_k$ denotes the cyclic group of order $k$.) We present $G$ as $\<t,a| a^2=1, at=ta\>$.  With respect to the symmetric
generating set  $\{t^{\pm 1}, a\}$, the geodesics of length $n$ are $t^n$, $t^{-n}$, $t^iat^{n-i-1}$, and $t^{-i}at^{i+1-n}$, for $i=0,\dots,n-1$. Thus the number of geodesics of length $n$ is, for $n \geq 2$, equal to $2n+2$, so the number of geodesics of length at most $n$ is $O(n^2)$.

Now consider  the presentation $\<t,c|c^2=t^2,ct=tc\>$, obtained from the previous one by the substitution $c=at$. With respect to the generating set
$\{t,t^{-1},c, c^{-1}\}$ each word
$x_1x_2\ldots x_n$ with $x_i\in \{tt,cc\}$ is a geodesic for the element $t^{2n}$. The number of such strings
for each $n$ is $2^n$, so the geodesic growth is exponential.  \end{example}
\begin{figure}[!ht]
 \centering
\bt{c}
 \includegraphics[scale=0.5]{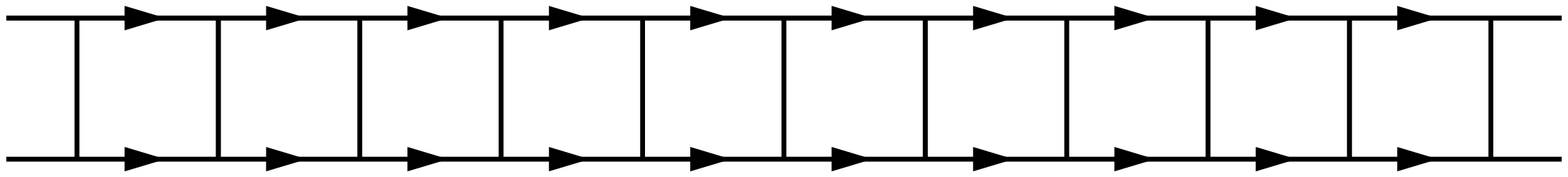}\\
 $\;$\\
 \includegraphics[scale=0.5]{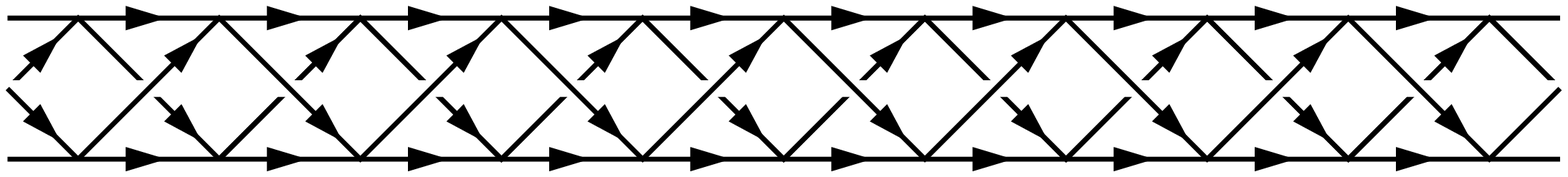}\et
  \caption{Cayley graphs for Example~\ref{eg1}.}
 \label{}
\end{figure}
Note that both generating sets are minimal: one cannot generate the group with fewer than two generators.

\begin{example} Consider $\ZZ$ with the presentation $\<t \mid\  \>$ and generating set $\{t^{\pm 1}\}$. For $n=0$ there is one geodesic of length $n$, and for $n>0$ there are exactly two, so the geodesic growth function is linear. Now consider the presentation $\<t,s \mid t=s\>$. With respect to the generating set $\{s^{\pm 1}, t^{\pm 1}\}$, there are $2^n$ geodesics joining 1 to the group element $t^n$, namely those labelled by positive words of length $n$ in the symbols $s$ and $t$.
\end{example}

A similar doubling trick shows that, with respect to some finite generating set, the geodesic growth of every finitely generated infinite group is exponential. Thus if one wants to make non-trivial statements about
groups with subexponential geodesic growth, then one has to be content with imposing this constraint
with respect to {\em some} finite generating set (not an arbitrary one).
We would like to understand which groups
have subexponential (polynomial or intermediate) geodesic growth in this sense.

The bound recorded in Lemma~\ref{p:lower-bound} tells us that if a group has polynomial geodesic growth with respect to some generating set then its word growth must be polynomial. Hence, by Gromov's celebrated theorem~\cite{\Gromov}, we obtain the following result which explains why we focus on virtually
nilpotent groups in the rest of the text.

\begin{corollary}
If a group $G$  has polynomial geodesic growth with respect to some finite generating set, then $G$  is virtually nilpotent.
\end{corollary}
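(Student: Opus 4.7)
The plan is to combine Lemma~\ref{p:lower-bound} with Gromov's polynomial growth theorem; no other ingredient is needed, and there is no real obstacle.

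First I would fix a finite symmetric generating set $X$ with respect to which $\Gamma_{G,X}(n)\le cn^d$ for some constants $c,d$ and all $n$. Then Lemma~\ref{p:lower-bound} immediately gives
\[
\gamma_{G,X}(n) \;\le\; \Gamma_{G,X}(n) \;\le\; c\,n^d,
\]
so the ordinary (volume) growth function of $G$ with respect to $X$ is bounded above by a polynomial in $n$. Since the asymptotic type of the word growth function is a quasi-isometry invariant (independent of the chosen finite generating set), this means $G$ has polynomial word growth in the usual sense.

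Finally I would invoke Gromov's theorem~\cite{\Gromov}, which states that any finitely generated group of polynomial word growth is virtually nilpotent. This yields the conclusion. The only thing worth underlining is that although the hypothesis on $\Gamma_{G,X}$ is a statement about \emph{some} specific generating set, this is enough, because polynomial word growth with respect to one finite generating set is equivalent to polynomial word growth with respect to every finite generating set, which is precisely the hypothesis of Gromov's theorem.
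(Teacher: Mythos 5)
Your proposal is correct and is exactly the argument the paper gives: Lemma~\ref{p:lower-bound} bounds the word growth $\gamma_{G,X}$ above by the geodesic growth $\Gamma_{G,X}$, so a polynomial bound on the latter forces polynomial word growth, and Gromov's theorem then yields virtual nilpotence. Nothing further is needed.
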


In addition to the obvious lower bound provided by the word growth function given in Lemma~\ref{p:lower-bound} there is another lower bound that comes from the good behavior of geodesics under lifts along homomorphisms.

\begin{lemma}\label{p:quotient-bound}
Let $G$ be a group with a finite symmetric generating set $X$.
Let $\phi: G \to G'$ be an epimorphism of groups and take $X'=\phi(X)$ as a generating set for $G'$. The geodesic growth functions of $G$ and $G'$ satisfy the following inequality:  for all $n \geq 0$,
\[
 \Gamma_{G,X}(n) \geq \Gamma_{G',X'}(n).
\]
\end{lemma}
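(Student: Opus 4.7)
The plan is to establish a geodesic-preserving map from the set of geodesics in $G'$ to the set of geodesics in $G$. To make this precise, I would follow the paper's earlier suggestion to treat $X$ formally as an indexed alphabet $\{x_1,\dots,x_m\}$ so that both $X$ and $X'=\{\phi(x_1),\dots,\phi(x_m)\}$ are indexed by the same set $\{1,\dots,m\}$ (allowing repetitions). A word of length $n$ is then simply a sequence $(i_1,\dots,i_n)$ of indices; the \emph{same} sequence labels a path from the identity in the Cayley graph of $G$ and a path from the identity in the Cayley graph of $G'$.

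The core claim is that if a word $w=(i_1,\dots,i_n)$ is a geodesic in $G'$, then it is also a geodesic in $G$. I would prove this by contradiction: if there were a strictly shorter word $(j_1,\dots,j_k)$, with $k<n$, representing the same element $x_{i_1}\cdots x_{i_n}\in G$, then applying the homomorphism $\phi$ to both sides of this equality would produce a word of length $k$ in $X'$ representing $\phi(x_{i_1})\cdots\phi(x_{i_n})\in G'$, contradicting the geodesity of $w$ in $G'$.

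Conversely, the map sending a word in $X$ to the corresponding word in $X'$ (via $\phi$ on each letter) is well-defined on sequences of indices, so the assignment ``geodesic in $G'$ $\mapsto$ the same index sequence regarded as a word in $X$'' is an injection from the set of geodesics in $G'$ of length at most $n$ into the set of geodesics in $G$ of length at most $n$. Counting immediately yields $\Gamma_{G,X}(n)\ge\Gamma_{G',X'}(n)$.

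The only subtlety — and it is minor — is the bookkeeping around viewing generating sets as indexed tuples rather than as subsets of the group. This matters because a single geodesic word in $X'$ might not obviously determine a unique lift if the letters of $X'$ are allowed to coincide, but by working throughout with index sequences (equivalently, with the epimorphism $F(X)\to G$ from the free monoid) the correspondence is canonical. Once this framework is in place, the argument is essentially one line, and there is no genuine obstacle to the proof.
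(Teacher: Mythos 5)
Your proposal is correct and follows essentially the same argument as the paper: lift each geodesic of $G'$ to a word over $X$, show by the same contradiction that the lift is geodesic in $G$, and count via the resulting injection (the paper phrases the lift as a set-theoretic section $X'\to X$ rather than via index sequences, but this is only a bookkeeping difference).
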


\begin{proof} If $w=\phi(x_1)\dots\phi(x_n)$ is a geodesic word in $G'$, then $x_1\dots x_n$
 must be a geodesic in $G$: if there were a shorter word $y_1\dots y_l$ evaluating to
 the same element of $G$, then we would have $w=\phi(y_1)\dots \phi(y_l)$ in $G'$, contradicting the
 assumption that $w$ is geodesic. Thus a choice of set-theoretic section $X'\to X$ of
 $x\mapsto \phi(x)$ defines an injection from the set of geodesic words of length $n$ in $(G',X')$ to
 the set of geodesic words of length $n$ in $(G,X)$.
\end{proof}

\begin{remark}
In this article we will not investigate the possible existence of
finitely generated groups that, with respect to certain generating
sets, have geodesic growth that is sub-exponential but
super-polynomial, i.e. ``intermediate".  Groups with intermediate word
growth have attracted considerable attention since their discovery by
Grigorchuk \cite{MR764305} in the 1980s. Might some such groups have intermediate
geodesic growth with respect to some generating sets?
It is natural to look first at Grigorchuk's original group.
 With respect to the standard generating set of four
involutions, the third and fourth authors, working with Gutierrez
\cite{\EGS}, have proved that its geodesic growth rate lies between
$O((\sqrt 2)^n)$ and $O((\sqrt{1+\sqrt 3})^n)$; in particular it is not
intermediate.

{\em A priori}, it is also possible that there is an example of a
virtually nilpotent group that has intermediate geodesic growth  with respect to some finite generating set.
\end{remark}


\section{Geodesic growth for groups that map onto $\ZZ^2$}\label{sec:zz}

In this section we show that if a group maps onto $\ZZ^2$,
then its geodesic growth with respect to any finite generating set is
exponential; any finitely generated group that is not virtually abelian
satisfies this condition.
 Our proof closely follows work of Michael Shapiro (Section 2 in \cite{\Pascal}). We thank Mark Sapir for pointing this out to us.
We  show that if a finitely generated nilpotent group is not virtually cyclic, then it satisfies
this condition.

Take a basis $\{a,b\}$ of  $\ZZ^2$ and work with the symmetric generating set
$X=\{a^{\pm 1}, b^{\pm 1}\}$. The  $\binom{2n}{n}$ distinct permutations of the word $a^nb^n$ provide us with more than $2^n$ geodesics of length $2n$, so the geodesic growth of $\ZZ^2$ with respect to $X$ is exponential. The following proposition extends this observation to arbitrary finite generating sets of $\ZZ^2$. 
\begin{proposition}[Shapiro \cite{\Pascal}]\label{free_abelian_case}
The group $\ZZ^2$ has exponential geodesic growth with respect to every finite generating set.
\end{proposition}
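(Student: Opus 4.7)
The plan is to exhibit, for any finite symmetric generating set $X$ of $\ZZ^2$, two generators $a,b \in X$ whose images in $\RR^2$ are linearly independent and point in ``extreme'' directions with respect to $X$, and then to count the permutations of $a^n b^n$ as distinct geodesic words.

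First I would view $X$ as a finite set of vectors in $\RR^2$. Because $X$ is symmetric and generates the rank-two group $\ZZ^2$, the convex hull $P$ of $X$ is a centrally symmetric convex polygon with non-empty interior. Any such polygon has vertices in antipodal pairs, and any two adjacent vertices $a, b$ lie on a common edge, so they are not antipodal and hence linearly independent over $\RR$. Moreover, since $[a,b]$ is a supporting edge of $P$, there is a linear functional $\ell : \RR^2 \to \RR$ such that $\ell(a) = \ell(b) = 1$ and $\ell(x) \le 1$ for every $x \in X$.

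With $a,b,\ell$ chosen, fix $n \ge 1$ and set $g_n = na + nb$. For any factorization $x_{i_1}\cdots x_{i_m} = g_n$ with $x_{i_j} \in X$,
\[ 2n \;=\; \ell(g_n) \;=\; \sum_{j=1}^m \ell(x_{i_j}) \;\le\; m, \]
so the word length of $g_n$ satisfies $|g_n|_X \ge 2n$, with equality attained by the word $a^n b^n$. Since $\ZZ^2$ is abelian, every permutation of $a^n b^n$ also evaluates to $g_n$ and has length $2n$, hence is a geodesic word. There are $\binom{2n}{n}$ such permutations, giving
\[ \Gamma_{\ZZ^2,X}(2n) \;\ge\; \binom{2n}{n} \;\ge\; 2^n, \]
which is exponential.

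The substantive step is the geometric lemma supplying the adjacent, linearly independent vertices $a,b$ and the associated supporting functional $\ell$. This rests on the central symmetry of $X$ (which makes $P$ a centrally symmetric polygon, so no two adjacent vertices are antipodal) together with the assumption that $X$ generates $\ZZ^2$ (which forces $P$ to be two-dimensional). Once $a,b,\ell$ are in hand, the remaining bound is an immediate multinomial count.
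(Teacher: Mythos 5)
Your proof is correct and follows essentially the same route as the paper: take the convex hull $P$ of the generators, pick two generators spanning an edge of $P$, show $a^nb^n$ is geodesic, and count the $\binom{2n}{n}$ permutations. The only cosmetic difference is that you certify geodesicity via a supporting linear functional for the edge, whereas the paper phrases the same fact as the edge of $2nP$ lying outside $(2n-1)P$; your functional version is arguably the cleaner way to write it down.
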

For completeness, we present a self-contained proof.
\begin{proof}
Let $X=\{x_1,\ldots,x_m\}$ be a symmetric generating set
for $\ZZ^2$. We embed $\ZZ^2$ in $\RR^2$ as the set of points with integer coordinates and
write $x_i=(a_i,b_i)\in \RR^2$.

Consider the non-degenerate, centrally-symmetric
convex polygon $P\subset\RR^2$ that is the convex hull of $\{ (a_i, b_i) \mid i=1,\ldots,m \}$.
We write $\lambda P$ for the image of $P$ under the homothety $v\mapsto \lambda v$ of $\RR^2$.

 Let $x_i, x_j\in X$ be such that the segment joining $(a_i,b_i)$ to
$(a_j,b_j)$
lies entirely on the boundary of $P$.
We claim that  $x_i^nx_j^n$ is at distance $2n$
from the identity in the  Cayley graph of  $\ZZ^2$ with respect to $X$.
To see this, note that the vectors that represent the elements of $\ZZ^2$ of length at most $2n-1$ with respect to $X$ are contained in the polygon $(2n-1)P$, and the vector $nx_i + nx_j$ is on the middle of the edge $[2nx_i,2nx_j]$ of the polygon $2nP$, and this edge is entirely outside of $(2n-1)P$.

Thus the word $x_i^nx_j^n$ is geodesic of length $2n$, as is each of
 the $\binom{2n}{n}$ distinct words obtained by
permuting its letters.
\end{proof}

By applying Lemma~\ref{p:quotient-bound} we deduce:

\begin{corollary} \label{p:onto-z^2}
A  group that maps homomorphically onto $\ZZ^2$ has exponential geodesic growth with respect to any finite generating set.
\end{corollary}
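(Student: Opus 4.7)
The plan is to stack Lemma \ref{p:quotient-bound} on top of Proposition \ref{free_abelian_case}. Given an epimorphism $\phi \colon G \to \ZZ^2$ and a finite symmetric generating set $X$ of $G$, I would take $X' = \phi(X)$ as a symmetric generating set of $\ZZ^2$. Proposition \ref{free_abelian_case} then yields exponential geodesic growth of $\ZZ^2$ with respect to $X'$, and Lemma \ref{p:quotient-bound} transfers this lower bound up to $\Gamma_{G,X}(n)$, giving the desired exponential lower bound.

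The only point that requires a moment's thought is the case where $\phi(X)$ contains the identity of $\ZZ^2$ or repeats generators. Repetitions are permitted throughout the paper, since generators are formally treated as labels in a free monoid mapping to the group, and the proof of Proposition \ref{free_abelian_case} requires only that the convex hull of the vector representatives of $X'$ be a non-degenerate centrally symmetric polygon. This condition is automatic once $X'$ is symmetric and generates $\ZZ^2$, so the proof passes through without modification. In effect, there is no real obstacle: the corollary is essentially a one-line citation of the two preceding results.
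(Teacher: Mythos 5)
Your proposal is correct and matches the paper's own argument exactly: the corollary is deduced by combining Lemma \ref{p:quotient-bound} with Proposition \ref{free_abelian_case}. Your additional remark about repeated or trivial images in $\phi(X)$ is a sensible sanity check and does not change the argument.
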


\begin{proposition}\label{t:nilpotent}
Let $G$ be a finitely generated nilpotent group. If $G$ is not virtually cyclic, then
it has exponential growth with respect to every finite generating set.
\end{proposition}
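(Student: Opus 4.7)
The plan is to reduce the proposition to Corollary~\ref{p:onto-z^2}: it suffices to prove that any finitely generated nilpotent group $G$ which is not virtually cyclic admits an epimorphism onto $\ZZ^2$. Given such an epimorphism, the image of any finite generating set of $G$ generates $\ZZ^2$, and Corollary~\ref{p:onto-z^2} immediately supplies the required exponential lower bound on $\Gamma_{G,X}(n)$.

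To construct the epimorphism I would first kill the torsion. In a finitely generated nilpotent group the set $T$ of torsion elements is a finite normal subgroup, and the quotient $\bar G = G/T$ is torsion-free nilpotent. A short check (using that an extension of $\ZZ$ by a finite kernel is virtually $\ZZ$, and that a torsion-free nilpotent group containing a finite-index infinite cyclic subgroup is forced to be abelian, hence itself $\ZZ$, by Schur's theorem applied to its centre) shows that $G$ is virtually cyclic if and only if $\bar G$ is trivial or isomorphic to $\ZZ$. Under the hypothesis of the proposition, $\bar G$ is therefore nontrivial and not cyclic.

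Next I invoke two classical facts about torsion-free finitely generated nilpotent groups: first, the abelianisation $\bar G^{ab}$ is torsion-free (a consequence of Mal'cev's theorem on the torsion-freeness of the lower-central-series quotients); second, the nilpotent analogue of the Burnside basis theorem, which states that the minimal number of generators of $\bar G$ equals the minimal number of generators of $\bar G^{ab}$ (because $[\bar G,\bar G]$ lies in the Frattini subgroup of $\bar G$). If $\bar G^{ab}$ were cyclic then $\bar G$ would itself be cyclic, contradicting the conclusion of the previous paragraph. Hence $\bar G^{ab}\cong \ZZ^{r}$ with $r\geq 2$, and composing $G\twoheadrightarrow \bar G\twoheadrightarrow \bar G^{ab}\twoheadrightarrow \ZZ^2$ gives the desired map.

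The proof is essentially a careful assembly of standard facts, so there is no deep technical obstacle. The one step that requires the most attention is the equivalence between $G$ being virtually cyclic and $\bar G$ being $1$ or $\ZZ$, since this is what converts the hypothesis of the proposition into a statement about $\bar G^{ab}$ to which the Burnside-basis argument can be applied.
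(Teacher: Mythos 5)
Your overall strategy coincides with the paper's: reduce to producing an epimorphism $G\twoheadrightarrow\ZZ^2$ and invoke Corollary~\ref{p:onto-z^2} (this is exactly the content of Lemma~\ref{l:nilp} in the paper). However, your construction of that epimorphism contains a genuine gap. The claim that the abelianisation of a finitely generated torsion-free nilpotent group is torsion-free is false, and the result you cite in its support is misstated: Mal'cev's theorem guarantees that the \emph{upper} central series quotients of a torsion-free nilpotent group are torsion-free, not the lower central series quotients. A standard counterexample is
\[
\bar G=\langle\, x,y,z \mid [x,y]=z^2,\ [x,z]=[y,z]=1 \,\rangle,
\]
the lattice in the real Heisenberg group generated by the two standard unipotents together with a square root of their commutator; it is torsion-free, finitely generated, nilpotent of class $2$, and $\bar G^{\mathrm{ab}}\cong\ZZ^2\times C_2$. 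Consequently your final step breaks: from the Burnside/Frattini argument you only get that $\bar G^{\mathrm{ab}}$ is non-cyclic, and without torsion-freeness a non-cyclic abelianisation (e.g.\ $\ZZ\times C_2$, a priori) need not surject onto $\ZZ^2$.

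The conclusion you need --- that $\bar G^{\mathrm{ab}}$ has torsion-free rank at least $2$ --- is in fact true, but it requires a different argument. One option is a Hirsch length/Mal'cev completion argument: if $\bar G^{\mathrm{ab}}\otimes\QQ$ were at most one-dimensional, the rational Lie algebra of $\bar G$ would be generated by one element, hence abelian of dimension at most $1$, forcing $\bar G\cong\ZZ$ or $1$. The paper avoids all of this with a short induction on the nilpotency class: if $G/Z(G)$ maps onto $\ZZ^2$ then so does $G$; otherwise $G/Z(G)$ is virtually cyclic, so $G$ is a central extension of a virtually cyclic group, hence virtually abelian, and then $G$ modulo its finite torsion subgroup is free abelian of rank at least $2$. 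You should either adopt such an induction or replace the false torsion-freeness claim with a correct rank estimate.
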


In the light of Corollary \ref{p:onto-z^2}, this is an immediate consequence of the
following lemma.

\begin{lemma}\label{l:nilp}
If a  finitely generated nilpotent group is not virtually cyclic, then it maps homomorphically onto $\ZZ^2$.
\end{lemma}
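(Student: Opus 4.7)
The plan is to split into cases based on the torsion-free rank $r$ of the abelianization $G^{\mathrm{ab}} = G/[G,G]$, which is a finitely generated abelian group. First, if $r \ge 2$, then $G^{\mathrm{ab}}$ obviously surjects onto $\ZZ^2$ (quotient by the torsion subgroup and by all but two of the infinite-cyclic free factors), and composing with the abelianization map $G \twoheadrightarrow G^{\mathrm{ab}}$ produces the required homomorphism. The remaining work is to rule out $r = 0$ and $r = 1$, by showing in each case that $G$ itself is virtually cyclic, contradicting the hypothesis.

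Ruling out $r = 0$ should be routine. If $G^{\mathrm{ab}}$ is finite, I would argue by induction on the nilpotency class of $G$ that $G$ itself is finite, whence $G$ is trivially virtually cyclic (it contains the trivial subgroup with finite index). The inductive step exploits the surjections $(G^{\mathrm{ab}})^{\otimes i} \twoheadrightarrow \gamma_i G / \gamma_{i+1} G$ furnished by iterated commutators, so finiteness of $G^{\mathrm{ab}}$ propagates to every lower-central-series quotient, and the iterated-extension structure of the lower central series then forces $G$ to be finite.

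The hard part will be ruling out $r = 1$. My plan there is to first reduce to the torsion-free case: the torsion subgroup $\tau(G) \triangleleft G$ of a finitely generated nilpotent group is finite, and whether $G$ is virtually cyclic is unchanged by quotienting out a finite normal subgroup. So suppose $G$ is torsion-free with $r(G^{\mathrm{ab}}) = 1$, and pass to the associated graded Lie ring $\mathrm{gr}(G) = \bigoplus_i \gamma_i G / \gamma_{i+1} G$, tensored with $\QQ$. This is a graded finite-dimensional $\QQ$-Lie algebra generated, as a Lie algebra, by its degree-one piece $G^{\mathrm{ab}} \otimes \QQ$, which has dimension one by assumption. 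A Lie algebra generated by a single element is one-dimensional and abelian, so $\gamma_i G / \gamma_{i+1} G$ is finite for every $i \ge 2$; iterating, $[G,G]$ is finite, and torsion-freeness of $G$ forces $[G,G] = 1$. Then $G$ is finitely generated, torsion-free, abelian, and of rank one, so $G \cong \ZZ$, which is virtually cyclic --- the desired contradiction. The crucial input in this step is that the associated graded Lie ring of a finitely generated nilpotent group is generated in degree one; this is the standard fact that makes the rank-$1$ analysis go through, and it is where all the real content of the lemma lives.
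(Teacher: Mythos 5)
Your proof is correct, but it takes a genuinely different route from the paper's. The paper inducts on the nilpotency class using the \emph{centre}: in the inductive step, either $G/Z$ maps onto $\ZZ^2$ (and one lifts), or $G/Z$ is virtually cyclic, in which case $G$ is a central extension of a virtually cyclic group, hence virtually abelian (the preimage of a finite-index cyclic subgroup of $G/Z$ is cyclic over its centre, hence abelian), and the base case ``virtually abelian nilpotent $\Rightarrow$ free abelian mod finite torsion'' finishes. You instead trichotomize on the torsion-free rank $r$ of $G^{\mathrm{ab}}$ and do the real work in the case $r=1$ via the associated graded Lie ring $\bigoplus_i \gamma_i G/\gamma_{i+1}G$, using that it is generated in degree one and that a Lie algebra generated by one element is one-dimensional abelian; this correctly forces each $\gamma_iG/\gamma_{i+1}G$ ($i\ge 2$) to be finite and, after killing the finite torsion subgroup, forces $G\cong\ZZ$. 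Your reductions (quotienting by the finite torsion subgroup preserves both nilpotency and the property of being virtually cyclic, and does not change $r$) are all legitimate for finitely generated nilpotent groups. What the paper's argument buys is brevity and the avoidance of the graded Lie ring machinery, at the cost of the slightly slick ``central extension of virtually cyclic is virtually abelian'' step; what yours buys is that it isolates and proves the sharper statement actually underlying the lemma, namely that a finitely generated nilpotent group whose abelianization has torsion-free rank at most one is virtually cyclic, via a general principle (degree-one generation of the lower central factors) that is reusable elsewhere.
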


\begin{proof} If $G$ is virtually abelian, then $G$ modulo its (finite) torsion subgroup is free abelian.
This observation covers the base case of an induction on the nilpotency class of $G$.
In the inductive step we can assume that $G$ modulo its centre $Z$ satisfies the lemma.
If $G/Z$ maps onto $\ZZ^2$ then so does $G$. If $G/Z$ is virtually cyclic then $G$,
being a central extension of a virtually cyclic group, is virtually abelian and the observation
in the first sentence of the proof applies once more.
\end{proof}


\section{Geodesic growth in some groups with virtually cyclic abelianization}\label{sec:vcyclic}

According
to Corollary~\ref{p:onto-z^2}, the only groups that might have finite generating sets with respect to which the geodesic growth is polynomial are those with virtually cyclic abelianization. Here we consider
two such groups: both are of the form $\ZZ^2\rtimes C_2$ but their geodesic growth functions behave very
differently.

\begin{remark} Let $G$ be a group that has a finite index subgroup $H$ mapping onto a
free abelian group $A$ of rank at least 2. By  inducing  $H\to A$ (in the
sense of representation theory) we obtain a
map from $G$ onto a virtually abelian group of rank at least 2. Thus Lemma \ref{l:nilp}
 implies that if a finitely generated {\em virtually} nilpotent group $G$ is not virtually cyclic, then it  maps onto a  group that is {\em virtually} free abelian group of rank at least 2.  Naively, one might hope that  this extension of Lemma \ref{l:nilp} would  lead to an exponential lower bound on the geodesic growth  functions of $G$, thus completing the characterisation of groups with polynomial geodesic growth. But the following example shows that this is not the case.
\end{remark}

\begin{example}\label{ex:ZbyC}
Let $\phi_1$ be the automorphism of $\ZZ^2$ that interchanges the basis elements $a$ and $b$,
and consider
\[
G_1 = \ZZ^2 \rtimes_{\phi_1} C_2 = \langle \ a,b,t \mid [a,b]=1, \ t^2=1, \ a^t=b \ \rangle,
\] which has abelianization $\ZZ \times C_2 $.

With respect to the generating set $\{a,a^{-1},t\}$ the
 language of geodesics has been computed explicitly~\cite{\Ereg} to be the set of words \[\left\{a^x, \ a^xta^y, \ a^{x_1}ta^{y_1}ta^{x_2}  \mid  x,y,x_1,y_1,x_2\in \ZZ,  \ x_1\cdot x_2 \geq 0, \ |y_1|>0\right\}.\]
Thus the geodesics of length $n\geq 5$ are
\[\begin{array}{lll}
a^{\pm n}, \\
a^{\pm (n-1)}t,\\
 ta^{\pm (n-1)}, \\
   a^{\pm i}ta^{\pm (n-i-1)} & \mathrm{for} &    i=1,\ldots, n-2, \\
ta^{\pm (n-2)} t, \\
 a^{\pm i}ta^{\pm (n-2-i)}t & \mathrm{for} &  i=1,\ldots, n-3, \\
ta^{\pm i}ta^{\pm (n-2-i)} & \mathrm{for} & i=1,\ldots, n-3,\\
 a^{\epsilon i}ta^{\pm j}t a^{\epsilon (n-i-j-2)} & \mathrm{for} & i=1,\ldots, n-4, \\
&& j=1,\ldots, n-i-3 \ \mathrm{and} \ \epsilon=\pm 1\end{array}\]
of which there are \[2+2+2+4(n-2)+2+4(n-3)+4(n-3)+4\sum_{i=1}^{n-4}i=2n^2-2n.\]
\end{example}

Similar examples may be constructed that are virtually free-abelian of arbitrary rank.

\begin{example}\label{ex:Zoran}
Let $\phi_2$ be the automorphism that sends each of the basis elements $a$ and $b$
to their inverse,
and consider
\[
 G_2 = \ZZ^2 \rtimes_{\phi_2} C_2 = \langle \ a,b,t \mid
 [a,b]=1, \ t^2=1, \ a^t=a^{-1}, \ b^t=b^{-1} \ \rangle,
\]
which has  abelianization $C_2\times C_2\times C_2$.

Let $H=\langle a,b\rangle$.  Note that if $h\in H$ and $g\in G_2\smallsetminus H$ then $h^g = h^{-1}$
and $g^2=1$.

Suppose $X$ is a symmetric generating set for $G_2$. We split $X$ as a disjoint union $X=Y \cup Z$,
where  $Y=\{x \in X \mid x \not \in H\}$ and $Z=\{x \in X \mid x \in H\}$.

Let $S=\{z^2 \mid z \in Z\} \cup \{(yy') \mid y,y'\in Y\}$. Note that $S$ is a symmetric set (since $Z$ is symmetric and the $Y$-letters have order 2 in $G_2$).

By extending the map $a \mapsto (1,0), \ b \mapsto (0,1)$ to a homomorphism, we can, as in the proof of Proposition \ref{free_abelian_case}, embed $H=\ZZ^2$ in $\RR^2$
as the set of points with integer coordinates. For $h \in H$, denote by $\overline{h}$ the vector in $\RR^2$ that is the image of $h$ under this embedding. Let $P$ be the centrally symmetric, convex polygon $P\subset\RR^2$ that is the convex hull of $\overline{S} = \{ \overline{s} \mid s \in S \}$. As before, $\lambda P$ denotes the image of $P$ under the homothety $v\mapsto \lambda v$ of $\RR^2$.

We claim that if $h\in H$ is at distance $n$ from the identity in the word
metric associated to $X$, then $\overline{h}$ is in $(n/2)P$.
To see this, consider a geodesic factorization $h=x_1x_2...x_n$.
We rewrite this factorization by pushing the $Y$-letters to the left, inverting
the $Z$-letters as they are pushed past using the relations $zy=yz^{-1}$
where necessary. The number of $Y$-letters in any representation of $h$ is even,
so at the end of this process we have a geodesic factorisation
\begin{equation}\label{e:h}
 h =  (y_1 y_2) \dots (y_{2m-1} y_{2m}) z_{2m+1} z_{2m+2} \dots z_n.
\end{equation}
Hence
\[
 \overline{h} = \overline{y_1 y_2} + \dots + \overline{y_{2m-1} y_{2m}} + \frac{1}{2}\overline{z_{2m+1}^2} + \dots + \frac{1}{2}\overline{z_n^2}.
\]
Thus $\overline{h}$ is a positive linear combination of vectors in $S \subseteq P$ with
the sum of coefficients equal to $n/2$; in particular, $\overline{h} \in (n/2) P$.

Let $s_1, s_2\in S$ be such that the segment joining the vertex $\overline{s}_1$ to $\overline{s}_2$ in $P$ lies entirely on the boundary of $P$, and consider $g=s_1^ns_2^n$.

We claim that  $s_1^ns_2^n$ is a geodesic for $g$ of length $4n$ with
respect to $X$. Indeed we have just seen the elements of $H$ that are a distance at most $4n-1$ from
the identity determine vectors that
 are contained in the polygon $(2n-\frac12)P$, whereas
 $\overline{s_1^ns_2^n} = n\overline{s}_1 + n\overline{s}_2$ lies on the boundary of $(2n)P$.

Since $s_1$ and $s_2$ commute, there are at least $\binom{2n}{n}$ geodesics for $g$ with respect to $X$,
showing that the geodesic growth of $G$ with respect to $X$ is exponential.
\end{example}


\section{Proof of the Main Theorem}\label{sec:main}

Our proof of the Main Theorem begins with  two lemmas, as follows.

Fix $\theta\ge 1$. Given a group $G$
with a symmetric, finite generating set $S$, we say a word $w=a_1\ldots a_l$ in the letters $S$ is  {\em{$\theta$-efficient}} if, in the associated word metric,
 $l\le\theta d(1,w) $. 

\begin{lemma}[Highways beat byways]\label{lemmama1} Fix $\theta \ge 1$.
Let $A$ be a finitely generated free
abelian group, let
$S$ be a symmetric, finite generating set for $A$ and let $T$ be a finite subset of $A$
such that $\langle T\rangle$ has finite index in $A$. Let $N\in\N$ and consider the set $S\cup T_N$, where
\[
T_N=\{t^{\pm N}\,|\,t\in T\}.
\]
If $N$ is sufficiently large  then only finitely many words in the letters
$S$ are $\theta$-efficient for the word metric associated to $S\cup T_N$.
\end{lemma}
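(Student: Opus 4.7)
The strategy is to show that once $N$ is sufficiently large the generators in $T_N$ become so efficient at reaching faraway points that any word built purely from $S$-letters must waste length once its length exceeds a bound depending only on $N$. Identify $A$ with $\ZZ^n\subset\RR^n$, let $\|\cdot\|$ denote the Euclidean norm, and put $L=\max_{s\in S}\|s\|$; then a word of length $l$ in the letters $S$ represents an element $g\in A$ with $\|g\|\le Ll$.

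The key technical step is the upper bound
\[
d_{S\cup T_N}(1,g)\;\le\;\frac{K}{N}\|g\|+C_N
\]
for every $g\in A$, where $K=K(S,T)$ is independent of $N$ and $C_N=C(N,S,T)$ is independent of $g$. To prove it, use that $\langle T\rangle$ has finite index in $A$, so $T$ contains an $\RR$-linearly independent subset $\{t_1,\ldots,t_n\}$ for which $H_0=\langle t_1,\ldots,t_n\rangle$ still has finite index in $A$. Fix coset representatives $\eta_1,\ldots,\eta_m$ of $A/H_0$. Given $g\in A$, write $g=\eta_j+\sum_i c_it_i$ with $c_i\in\ZZ$; because the $\eta_j$ are bounded and $(t_1,\ldots,t_n)$ is a real basis of $\RR^n$, one obtains $\max_i|c_i|\le C_0(\|g\|+R)$ for constants $C_0,R$ depending only on $S$ and $T$. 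Divide with remainder, $c_i=q_iN+r_i$ with $|r_i|\le N$, so that
\[
g\;=\;\sum_i q_i(Nt_i)\;+\;\Bigl(\eta_j+\sum_i r_it_i\Bigr).
\]
Each $Nt_i$ is a single letter of $T_N$; the bracketed remainder lies in the finite set of elements of $A$ of norm at most $R+nN\max_i\|t_i\|$, and so has $S$-length bounded by a constant $M_N$ depending only on $N,S,T$. Adding the contributions gives the displayed estimate with $K=nC_0$.

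To finish, suppose $w=a_1\cdots a_l$ is a word in the letters $S$ representing an element $g\in A$ which is $\theta$-efficient for the word metric of $S\cup T_N$. Combining the two bounds,
\[
l\;\le\;\theta\,d_{S\cup T_N}(1,g)\;\le\;\theta\Bigl(\frac{K}{N}\|g\|+C_N\Bigr)\;\le\;\frac{\theta KL}{N}\,l+\theta C_N.
\]
Taking $N>\theta KL$ makes the coefficient of $l$ on the right strictly less than $1$, so $l\le \theta C_N/(1-\theta KL/N)$, a bound independent of $w$. Since $S$ is finite there are only finitely many $S$-words of bounded length, giving the lemma. The one delicate point is the intermediate estimate: the coefficient of $\|g\|$ must decay like $1/N$ while the additive constant $C_N$ must be $g$-independent, which forces one to separate the \emph{coarse travel} done by $T_N$-letters from the \emph{bounded fine correction} done by $S$-letters over a residual region of norm $O(N)$.
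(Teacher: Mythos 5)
Your proof is correct and follows essentially the same strategy as the paper's: both establish an upper bound of the form $d_{S\cup T_N}(1,g)\le \frac{K}{N}\|g\|+C_N$ (coarse travel by $T_N$-letters plus a bounded $S$-correction over a residual region of size $O(N)$) and then play it off against the linear comparability of $d_S$ with the Euclidean norm, choosing $N$ large enough that the relevant coefficient drops below the critical threshold. The only differences are cosmetic: the paper derives the key estimate by projecting to the nearest point of $\langle T\rangle$ and rescaling rather than via your basis-plus-division-with-remainder decomposition, and it closes by bounding $\|g\|$ through the lower bound $\beta\|g\|\le d_S(1,g)$ where you bound the length $l$ directly through $\|g\|\le Ll$.
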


\begin{proof} We identify $A$ with the set of points in $\RR^r$ with integer coordinates
and equip $\RR^r$ with the standard Euclidean norm $\|\cdot\|$.
The proof involves this norm, the word metrics $d_S$ and $d_{S\cup T_N}$
on $A$, and $d_T$ and $d_{T_N}$ on $\<T\>$ and $\<T_N\>$ respectively. We write $0$ for the identity element of $A\subset\RR^r$.

We fix
 $\alpha,\beta>0$ such that, for all $x\in A$,
\[
\beta ||x|| \leq d_S(0,x) \leq \alpha||x||,
\]
and $\varepsilon>0$ such that for all $y\in \RR^r$, $\|y- [y]_T\|\le \varepsilon$, where
$[y]_T$ is the lex-least among the points in $\langle T\rangle\subset\ZZ^r\subset\RR^r$
that are nearest to $y$.
 Finally, fix $\lambda>0$ such that for all $y\in \RR^r$ 
\[
d_T(0,[y]_T)\leq\lambda ||y||.
\]
Suppose that $N$ is large enough so that $\beta/\theta-\lambda/N>0$.

Observe that, for all $z\in \RR^r$, we have:
\begin{enumerate}
\item $[\frac1N z]_T=\frac1N[z]_{T_N}$,
\item $\|z-[z]_{T_N}\|=N\|\frac1N z-[\frac1N z]_T\|$,
\item $d_T(0,[\frac1N z]_T)=d_{T_N}(0,[z]_{T_N})$,
\end{enumerate}
whence $\|z-[z]_{T_N}\|\le N\varepsilon$.

For all $z\in A=\ZZ^r$,
\begin{align*}
d_{S\cup T_N}(0,z)
 &\leq d_{S \cup T_N}(0,[z]_{T_N})+d_{S \cup T_N}(z,[z]_{T_N})\\
 &\leq d_{T_N}(0,[z]_{T_N})+d_S(z,[z]_{T_N})\\
 &= d_T(0,[\tfrac1N z]_T)+d_S(0,z-[z]_{T_N})\\
 &\leq \lambda\|\frac{1}{N}z\| + \alpha\|z-[z]_{T_N}\|\\
 &\leq \frac{\lambda}{N}||z||+\alpha N \varepsilon.
\end{align*}
So if $z\in A$  has a $\theta$-efficient
 representative   with respect to $S \cup T_N$
that contains no letters from $T_N$, then
\[ \beta||z|| \leq d_S(0,z) \le\theta d_{S\cup T_N}(0,z) \leq
\theta\left(\frac{\lambda}{N}||z||+\alpha N \varepsilon\right),
\]
yielding
\[
||z||(\beta/\theta-\lambda/N)\leq \alpha N \varepsilon
\]
or, equivalently (since $\beta/\theta - \lambda/N>0$),
\[
||z||\leq\frac{\alpha N \varepsilon}{\beta/\theta-\lambda/N}.
\]
There are only finitely many  elements of $A=\ZZ^r$ that satisfy this bound.
For each such element, the number of 
$\theta$-efficient representatives is finite since their length is at most $$\theta d_{S\cup T_N} (0,z)\leq \theta\left(\frac{\lambda}{N}||z||+\alpha N \varepsilon\right)\leq \theta\left(\frac{\lambda}{N}\left(\frac{\alpha N \varepsilon}{\beta/\theta-\lambda/N}\right)+\alpha N \varepsilon\right).$$ The result follows.
\end{proof}

\begin{lemma}[Dominating generator]\label{lemmama2}
For a group $G$, let $X=\{x_0,x_0^{-1}\}\cup Y$ 
be a finite symmetric generating set. If there exists a constant $k$ such that, for any geodesic in $G$ with respect to $X$, the total number of appearances of the generators $y\in Y$
 is at most $k$, then the geodesic growth of $G$ with respect to $X$ is polynomial.
\end{lemma}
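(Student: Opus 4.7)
The plan is to bound the number of geodesic words of each length $n$ directly, using a structural description of such words, and then sum. Let $w = a_1 a_2 \cdots a_n$ be a geodesic word with respect to $X$. By hypothesis, the number of indices $i$ with $a_i \in Y$ is some $m \le k$. These $m$ special positions partition $w$ into at most $m+1$ maximal blocks consisting entirely of letters from $\{x_0, x_0^{-1}\}$.

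The key observation I would then make is that each such maximal block must be \emph{monotone} in sign, i.e.\ consist entirely of $x_0$'s or entirely of $x_0^{-1}$'s. Otherwise the block would contain $x_0 x_0^{-1}$ or $x_0^{-1} x_0$ as a subword, and deleting that pair would yield a strictly shorter word representing the same element of $G$, contradicting the assumption that $w$ is a geodesic.

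With this structural fact in hand, counting is immediate. For a fixed number $m$ of $Y$-letters, the number of geodesic words of length exactly $n$ of the prescribed form is at most
\[
\binom{n}{m} \cdot |Y|^m \cdot 2^{m+1},
\]
where the three factors count, respectively: the choice of which $m$ of the $n$ positions carry a $Y$-letter; the choice of specific letter from $Y$ at each of those positions; and a choice of sign ($x_0$ or $x_0^{-1}$) for each of the at most $m+1$ monotone blocks. Since $m \le k$, this expression is bounded by a constant (depending on $k$ and $|Y|$) times $n^k$. Summing over $m = 0, 1, \dots, k$ and then over all lengths $\le n$ gives $\Gamma_{G,X}(n) = O(n^{k+1})$, which is the required polynomial bound.

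I do not foresee any serious obstacle; the only nontrivial step is the monotonicity observation for the $x_0^{\pm 1}$-blocks, and the rest is routine enumeration.
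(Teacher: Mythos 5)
Your argument is correct and follows essentially the same route as the paper: decompose a geodesic into at most $k+1$ monotone blocks of $x_0^{\pm 1}$ separated by at most $k$ letters of $Y$, then count via $\binom{n}{m}\cdot|Y|^m\cdot 2^{m+1}$ and sum to get $O(n^{k+1})$. The one point you spell out that the paper leaves implicit is why each block is monotone in sign (a mixed block would contain a cancelling pair, contradicting geodesity), which is a worthwhile clarification but not a different proof.
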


\begin{proof} Let $m=|Y|$.
Each geodesic word of length $n$ over $X$ has the form
\[
 x_0^{\pm n_0} y_1 x_0^{\pm n_1} y_2 \dots y_\ell x_0^{\pm n_\ell},
\]
with  $\ell \leq k$, where $y_i\in Y$ and $n_0,\dots,n_\ell$ are non-negative integers such that
\[ n_0+\dots+n_\ell=n-\ell. \]
To construct any word of this form, one can start with the string $x_0^n$ and choose the $\ell$
sites at which to replace $x_0$ by some $y\in Y$; one then has to make a choice of sign for
the remaining strings of $x_0$. For fixed $\ell$, the number of possibilities for the
set of $Y$-sites is $\binom{n}{\ell}$ (the number of non-negative integer partitions of $n-\ell$). And
there are at most  $2^{\ell+1}$ possible sign choices
(fewer if some of the integers $n_0,\dots,n_\ell$ are 0). Therefore, the number  of geodesics
of length $n$, for $n >> k$, is bounded above by
\[
 2^{k+1}m^k \binom{n}{k} + 2^km^{k-1} \binom{n}{k-1} + \dots + 2^2m \binom{n}{1} + 2 \leq f(n):= (k+1)2^{k+1}m^k\binom{n}{k}.
\]
If $n$ is sufficiently large, then $f(n')<f(n)$ for all $n'<n$
and hence the number of geodesics of length at most $n$ is bounded above by
\[
 (n+1) f(n)  = 2^{k+1}m^k(k+1) (n+1)\binom{n}{k},
\]
which is a polynomial of degree $k+1$.
\end{proof}

\subsection*{Proof of the Main Theorem}
Let $G$ be a finitely generated group that contains an element $x$ whose normal closure
$A=\<\!\<x\>\!\>$ is abelian and of finite index. $A$ is finitely generated,
since it has finite-index in $G$. Replacing $x$ by a proper power if necessary,
we may assume that $A$ is free abelian of finite rank.

Let $Q=G/A$ and let
$\pi:G\longrightarrow Q$
be the natural projection. We
fix a set of coset representatives $R=\{q_1,q_2,\ldots,q_l\}$ for $A$ in $G$ and define
\[
D=\{pqr^{-1}\,|\,p,q,r\in R\cup R^{-1},\text{and }\pi(r)=\pi(p)\pi(q)\}.
\]
Let $S\subset A$ be a symmetric generating set that includes $D\subset A$ and is invariant under the conjugation action of $G$. 
The generators in $S$ are thought of as \emph{short} generators.

Define $X=S\cup\{x^N,x^{-N}\}\cup R \cup R^{-1}$ as a finite generating set for $G$, for some $N\in \mathbb N$. 
We will show that 
if $N$ is sufficiently large, then $G$ has polynomial growth with respect to $X$. 

First, we apply Lemma \ref{lemmama1} to $A$
 with $T:=\{q_ixq_i^{-1} \mid i=1,\dots,\ell\}$ in order to deduce the following:

\begin{claim}\label{obsv} If $N$ is sufficiently large, then
there is a constant $k=k(N)$ such that any word over $S$ that is geodesic
with respect to $X$  has length at most $k$.
\end{claim}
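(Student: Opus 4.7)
The plan is to deduce the claim as a direct application of Lemma \ref{lemmama1} with $T := \{q_i x q_i^{-1} : i = 1,\ldots,\ell\}$ and efficiency constant $\theta = 3$. First I need to verify the hypothesis that $\langle T\rangle$ has finite index in $A$; in fact $\langle T\rangle = A$. Since the normal closure of $x$ in $G$ is $A$, the abelian group $A$ is generated by all conjugates $gxg^{-1}$, $g\in G$. Writing $g = q_i a$ with $a\in A$ and using that $A$ is abelian gives $gxg^{-1} = q_i x q_i^{-1}\in T$, so $T$ generates $A$. Lemma \ref{lemmama1} then yields, for all sufficiently large $N$, that only finitely many words in the letters of $S$ are $3$-efficient for $d_{S\cup T_N}$; let $k=k(N)$ be the maximum length of such a word.

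The only substantive step is the inequality
\[
|w|_X \le 3\, |w|^A_{S\cup T_N} \qquad \text{for all } w\in A,
\]
which lets me convert $X$-geodesicity of a word over $S$ into $3$-efficiency with respect to $S\cup T_N$. To establish it, note that $S\subset X$, so $S$-letters contribute length $1$ in $X$, and each $T_N$-letter $(q_i x q_i^{-1})^{\pm N} = q_i x^{\pm N} q_i^{-1}$ is a word of length $3$ over $X$ (using $q_i^{\pm 1}\in R\cup R^{-1}$ and $x^{\pm N}\in X$). Substituting these $X$-expressions letter-by-letter into an $(S\cup T_N)$-geodesic for $w$ produces an $X$-word of length at most $3\,|w|^A_{S\cup T_N}$, giving the bound.

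Combining the two steps: if $w = s_1\cdots s_L$ is a word over $S$ that is geodesic with respect to $X$, then $L = |w|_X \le 3\,|w|^A_{S\cup T_N}$, so the word $s_1\cdots s_L$ is $3$-efficient in the sense of Lemma \ref{lemmama1}. By the finiteness conclusion of that lemma, $L\le k$, which is precisely the claim. There is no real obstacle once Lemma \ref{lemmama1} is available; the conceptual point is simply that the ``long-step'' generators $x^{\pm N}$ make the $X$-metric cheap enough to lower-bound a pure-$S$ geodesic length by an $A$-intrinsic quantity, because each $T_N$-letter has $X$-length uniformly bounded (by $3$) independently of $N$.
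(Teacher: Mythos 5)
Your proof is correct and follows essentially the same route as the paper: apply Lemma \ref{lemmama1} with $T=\{q_ixq_i^{-1}\}$ and $\theta=3$, using the observation that each $T_N$-letter has $X$-length at most $3$ to convert $X$-geodesicity into $3$-efficiency for $d_{S\cup T_N}$. Your explicit check that $\langle T\rangle=A$ (which the paper leaves implicit) is a welcome addition but does not change the argument.
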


To justify this observation, we need to know that a word in the letters $S$
that is geodesic in $(G,d_X)$ is  $\theta$-efficient in $(A,d_{S\cup T_N})$, where $\theta$ does not
depend on $N$. But  for each
$t=q_ixq_i^{-1}$ we have  $d_X(1,t^N) \le d_X(1,x^N) + 2 d_X(1,q_i) =3$, so if $a\in A$
equals a word of length $L$ in the letters $S\cup T_N$, then
it equals a word of length at most $3L$ in the letters $X$. In particular, words in $S$
that are geodesic in $(G,d_X)$ are 3-efficient in  $(A,d_{S\cup T_N})$.

We are now ready for the main argument of the proof. To avoid confusion, we write $y=x^N$.
In the light of Lemma~\ref{lemmama2}, it suffices to prove that
the number of letters from $S \cup R \cup R^{-1}$ in any geodesic over $X$ is uniformly bounded;
we shall prove that it is bounded by $k + |Q|$.

Given a geodesic word $w$ over $X$ we may do the following.
\begin{enumerate}
\item Since $S$ is invariant under conjugation by $R\subset G$ and its elements commute with $y$, all occurences
in $w$ of letters $s\in S$  can be moved to the left without changing the length of $w$. (As
$s\in S$ is pushed past $q_i\in R$ it is replaced by $q_isq_i^{-1}\in S$.)
\item If a subword of the form $pq$ with $p,q\in R \cup R^{-1}$ appears in $w$
at any stage, then we can replace it by $dr$, where $d=pqr^{-1}\in D$. Then,
 since $d\in D\subset S$, we can move $d$ to the left.
\end{enumerate}
Proceeding in this manner, an arbitrary geodesic over $X$ can be transformed into one of the form
\[
u(S)q_{i_1}y^{n_1}q_{i_2}y^{n_2}\ldots q_{i_\lambda}y^{n_\lambda},
\]
with $q_{i_j}\in R^{\pm 1}$, where $u(S)$ is a word over $S$, and where $\lambda$ has been made as
small as possible. We claim that  $\lambda\le l = |Q|$.

To prove this claim, consider what would happen
if there were more than $l$ factors. There would be two prefixes
\[
p_a=q_{i_1}y^{n_1}q_{i_2}y^{n_2}\ldots q_{i_a},\qquad p_b=q_{i_1}y^{n_1}q_{i_2}y^{n_2}\ldots q_{i_b},
\]
for some $a < b\leq \lambda$ (including the possibility $a=0$, i.e, $p_a$ being the empty word), such that $\pi(p_a^{-1}p_b)=1$ in $Q$. Hence,  $p_a^{-1}p_b=y^{n_a}w\in A,$
where $w=q_{i_{a+1}}y^{n_{a+1}}\ldots q_{i_b}$. Since $y^{n_a}$ commutes with $w$, we could
 bring $q_{i_a}$ and $q_{i_{a+1}}$ next to each other and perform a move of type (2)  (including the move of the newly obtained $d$ from $D$ all the way to the left). But this would contradict the assumption that
$\lambda$ had been minimized.

 Claim \ref{obsv} implies that the length of $u(S)$ is at most $k$, so the number of letters different from $y^{\pm 1}$ in the modified geodesic is at most $k+l$.
Since moves (1) and (2) do not change the number of $y^{\pm 1}$ letters, the number of letters different from $y^{\pm 1}$ in the modified geodesic is the same as that in the original geodesic.
 This completes the proof.
\qed

\section{Virtually cyclic groups}\label{s:six}

In the context of the Main Theorem, we do not know if one can obtain upper and lower bounds
of the same polynomial degree. But in the case of virtually cyclic groups, one can obtain
such bounds.

\setcounter{theorem}{1}

\begin{theorem}
Let $G$ be a virtually cyclic group generated by a finite symmetric set $X$. The geodesic growth function $\Gamma_{G,X}$ is either bounded above and below by an exponential function,  or else is bounded above and below by polynomials of the same degree.
\end{theorem}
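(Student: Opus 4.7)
I plan to reduce the theorem to the classical coefficient-growth dichotomy for rational power series. The case of finite $G$ is trivial, since then $\Gamma_{G,X}(n)$ is eventually the constant $|G|$, a polynomial of degree zero. So assume $G$ is infinite, hence virtually infinite cyclic. Any such group is word hyperbolic (it is quasi-isometric to $\RR$), so by the theorem of Cannon~\cite{\Cannon} (see also~\cite{\Epstein}) already recalled in the introduction, the geodesic growth series $\sum_n a_n z^n$, where $a_n$ counts geodesic words of length exactly $n$, is a rational power series. Consequently the cumulative series $\sum_n \Gamma_{G,X}(n) z^n = (1-z)^{-1}\sum_n a_n z^n$ is also rational.

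The key input I then invoke is the following classical fact: if $\sum_n b_n z^n$ is a rational power series and $(b_n)$ is a monotonic non-decreasing sequence of non-negative integers, then either there exist $B > 1$ and $c > 0$ with $b_n \ge cB^n$ for all sufficiently large $n$, or else there exist an integer $d \ge 0$ and constants $c_1, c_2 > 0$ with $c_1 n^d \le b_n \le c_2 n^d$ for all sufficiently large $n$. This is an elementary consequence of the partial-fraction decomposition of a rational function: the dominant asymptotic behaviour of its coefficients is controlled by the poles of smallest modulus, which either lie strictly inside the unit disc (giving exponential growth) or on the unit circle. In the latter case, the monotonicity of $(b_n)$ rules out periodic oscillation and forces a quasi-polynomial with a well-defined integer degree and a single positive leading coefficient.

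Applied to $b_n = \Gamma_{G,X}(n)$, this dichotomy yields exactly the two conclusions of the theorem. In the polynomial branch one obtains matching upper and lower bounds of the same integer degree, with the ``for all sufficiently large $n$'' clause upgraded to ``for all $n$'' by adjusting $c_1$ and $c_2$. In the other branch the lower bound $\Gamma_{G,X}(n) \ge cB^n$ for large $n$, combined with the trivial upper bound $\Gamma_{G,X}(n) \le |X|^n$, gives matching exponential bounds (after slightly shrinking the base to absorb the finitely many small values of $n$, which is possible because $\Gamma_{G,X}(n) \ge 1$ always). The principal obstacle is the rational-series dichotomy itself, and specifically the extraction of matching polynomial degrees (rather than merely a polynomial upper bound) in the non-exponential case --- it is here that the monotonicity of the cumulative growth function, as opposed to the pointwise count $a_n$ which can oscillate periodically, plays an essential role.
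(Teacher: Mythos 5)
Your reduction to the coefficient asymptotics of a rational power series is a genuinely different route from the one taken in the paper: there, after the same appeal to the hyperbolicity of virtually cyclic groups, the argument stays language-theoretic --- the geodesic language is regular for every finite generating set, and a regular language of subexponential growth is ``simply starred'', which yields matching polynomial upper and lower bounds of the same integer degree (this is the Bridson--Gilman and Epstein \emph{et al.} input the paper cites). Your version retains only the rationality of the generating function rather than the regularity of the language, and pushes the whole dichotomy onto a statement about rational series with monotone non-negative integer coefficients.

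That statement is true, but your justification of it is where the real work lies, and as written it has a gap. The minimal-modulus poles of a rational series with non-negative integer coefficients need not be roots of unity times the radius of convergence, so the off-axis contribution to the coefficients is almost periodic rather than periodic, and ``monotonicity rules out periodic oscillation'' does not dispose of it. What one actually needs, in both branches, is that the positive real point of the circle of convergence is a pole (Pringsheim) \emph{of maximal order among the poles on that circle}; this requires an argument (for instance: if a non-real pole had strictly larger order, the associated mean-zero almost periodic term would force some coefficients to be negative, by equidistribution on the closure of the relevant orbit in the torus), or a citation of the Berstel--Soittola theory of non-negative rational series. Granting that, the polynomial branch follows from Ces\`aro averaging together with monotonicity, via $b_N \ge \frac{1}{N+1}\sum_{n\le N} b_n$; and the exponential branch needs one more step you omit: the dominant almost periodic factor is only bounded away from zero on a syndetic set of indices, and it is monotonicity combined with the bounded gaps of that set that upgrades this to $b_n \ge cB^n$ for \emph{all} large $n$. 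None of this is false --- the lemma can be proved along these lines --- but it is not an elementary consequence of partial fractions; the cheaper fix is to keep the full strength of regularity and apply Perron--Frobenius to the transition matrix of a deterministic automaton for the geodesic language, which is essentially what the results cited in the paper do for you.
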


\begin{proof} The exponential upper bound is trivial, and
we observed in Section~\ref{sec:defn} that every infinite finitely generated group has some finite generating set for which the geodesic growth is exponential.

Theorem \ref{t:main} shows that virtually cyclic groups have at least one generating set for which the geodesic growth is polynomial. A virtually cyclic group is hyperbolic, and the language of geodesics is regular  for every finite generating set (see~\cite{MR1161694}). If the growth of a regular language is subexponential, then the language is simply starred and its growth is bounded above and below by polynomials of the same degree
(see~\cite{MR1906807} and \cite{MR1161694} p.20).
\end{proof}

\section*{Acknowledgments}

The first author is supported by a Senior Fellowship from the EPSRC.
The second author acknowledges
support from MEC grant MTM2008-01550 and is grateful for
the hospitality of the University of Queensland.
The third author acknowledges
support from
the University of Queensland NSRSF project number 2008001627 and
 ARC grants DP110101104 and FT110100178. The fourth author acknowledges support by NSF grants  DMS-0805932 and DMS-1105520.

\end{document}